 \theoremstyle{plain}
 \newtheorem{proposition}{Proposition}
 \theoremstyle{definition}
 \newtheorem{example}{Example}
 \DeclareMathOperator\var{\mathrm{var}}
 \DeclareMathOperator\cov{\mathrm{cov}}
 \renewcommand{\le}{\leqslant}
 \renewcommand{\ge}{\geqslant}
 \newcommand{\C}{{\mathbb{C}}}
 \newcommand{\CC}{{\hat {\mathbb{C}}}}
 \newcommand{\D}{\mathcal{D}}
 \newcommand{\T}{\mathcal{T}}
 \newcommand{\TT}{\mathcal{C}}
 \newcommand{\dd}{\mathrm{d}}
 \newcommand{\1}{\mathbf{1}}
 \newcommand{\bu}{\boldsymbol{u}}
 \newcommand{\bv}{\boldsymbol{v}}
\begin{document}
 \title{On the covariance of the \\ asymptotic empirical copula process}
 \author{Christian Genest\footnote{D\'e\-par\-te\-ment de ma\-th\'e\-ma\-ti\-ques et de sta\-tis\-ti\-que, Uni\-ver\-si\-t\'e La\-val, 1045, ave\-nue de la M\'e\-de\-ci\-ne, Qu\'ebec, Canada G1V 0A6. E-mail: Christian.Genest@mat.ulaval.ca}\quad and\quad Johan Segers\footnote{Ins\-ti\-tut de sta\-tis\-ti\-que, bio\-sta\-tis\-ti\-que et scien\-ces ac\-tua\-riel\-les (ISBA), Uni\-ver\-si\-t\'e ca\-tho\-li\-que de Lou\-vain, Voie du Ro\-man Pays 20, B--1348 Louvain-la-Neuve, Belgium. E-mail: Johan.Segers@uclouvain.be} \\[1ex]
 {\it Uni\-ver\-si\-t\'e La\-val and Uni\-ver\-si\-t\'e ca\-tho\-li\-que de Lou\-vain}}
 \maketitle

 \thispagestyle{empty}

\begin{abstract}
Conditions are given under which the empirical copula process associated with a random sample from a bivariate continuous distribution has a smaller asymptotic covariance function than the standard empirical process based on observations from the copula. Illustrations are provided and consequences for inference are outlined.
\end{abstract}

 \bigskip
 \noindent
\emph{Keywords}: Asymptotic variance; copula; dependence parameter; empirical process; independence; left-tail decreasing; rank-based inference.

 \section{Introduction}
 \label{section:1}

Consider a pair $(X,Y)$ of continuous random variables whose joint and marginal cumulative distribution functions are defined for all $x, y \in \mathbb{R}$ by
 $$
H(x,y) = \Pr (X \le x, Y \le y), \quad F(x) = \Pr (X \le x),\quad G(y) = \Pr (Y \le y),
 $$
respectively. The transformed variables $U = F(X)$ and $V = G(Y)$ are then uniform on $[0,1]$ and their joint distribution function, defined at every $u, v \in [0,1]$ by
 $$
C(u,v) = \Pr (U \le u, V \le v),
 $$
is the unique copula $C$ associated with $H$. The two functions are related through the equation $ C(u,v) = H\{ F^{-1} (u), G^{-1}(v) \}$, where $F^{-1} (u) = \inf \{ x \in \mathbb{R}: F(x) \ge u \}$ and $G^{-1} (v) = \inf \{ y \in \mathbb{R}: G(y) \ge v \}$ for all $u, v \in (0,1)$. Inference on $C$ is of interest, as it characterizes the dependence in the pair $(X, Y)$; see, e.g., \cite{Joe:1997,Nelsen:2006}. In particular, all margin-free concepts and measures of association such as Kendall's tau, Spearman's rho, Blomqvist's beta, Gini's gamma or Spearman's footrule depend only on $C$.

Let $(X_1, Y_1), \ldots , (X_n, Y_n)$ be a random sample from $H$ and write $U_i = F(X_i)$, $V_i = G(Y_i)$ for all $i \in \{ 1, \ldots , n \}$. When $F$ and $G$ are known, a natural estimate of $C$ is then given by the empirical distribution function of the sample $(U_1, V_1), \ldots , (U_n, V_n)$, defined at every $u, v \in [0,1]$ by
 $$
C_n (u,v) = \frac{1}{n} \sum_{i=1}^n \1 (U_i \le u , V_i \le v).
 $$

In fact, standard results from the theory of empirical processes \cite{Shorack-Wellner:1986} imply that $\C  _n = n^{1/2} (C_n - C)$ converges weakly, as $n \to \infty$, to a centered Gaussian process $\C $ on $[0,1]^2$ with continuous trajectories and covariance function given by
 $$
\cov \{ \C  (u,v), \C  (s,t) \} = C ( u \wedge s, v \wedge t) - C(u,v)C(s,t),
 $$
for all $u, v, s, t \in [0,1]$, with $a \wedge b = \min(a, b)$ for arbitrary $a, b \in \mathbb{R}$.

When the margins $F$ and $G$ are unknown, as is generally the case in practice, they can be estimated by their empirical counterparts, $F_n$ and $G_n$. A surrogate sample from $C$ is then given by the pairs $({\hat U}_1, {\hat V}_1), \ldots , ({\hat U}_n, {\hat V}_n)$, where ${\hat U}_i = F_n (X_i)$ and ${\hat V}_i = G_n (Y_i)$ for all $i \in \{1, \ldots , n \}$. The corresponding empirical distribution function, defined at every $u, v \in [0,1]$ by
 $$
{\hat C} _n (u,v) = \frac{1}{n} \sum_{i=1}^n \1 ({\hat U}_i \le u , {\hat V} _i \le v),
 $$
is traditionally called the empirical copula \cite{Deheuvels:1979}, although it is not a copula \emph{stricto sensu}. The function $\hat C_n$  provides a rank-based, consistent estimate of $C$ often used in practice for copula model selection and goodness-of-fit purposes; see, e.g., \cite{Berg:2009,Genest-Remillard-Beaudoin:2009}.

To be specific, let ${\dot C}_1 (u,v) = \partial C(u,v)/ \partial u$ and ${\dot C}_2 (u,v) = \partial C(u,v) / \partial v$ denote the partial derivatives of an arbitrary copula $C$, known to exist almost everywhere \cite[Chapter 2]{Nelsen:2006}. Now assume that they exist in fact everywhere and that they are continuous on $(0,1)^2$. Under these mild regularity conditions, it is then well known \cite{Rueschendorf:1976, Fermanian-Radulovic-Wegkamp:2004} that the empirical copula process $\mathbb{\hat C}_n = n^{1/2} ({\hat C}_n - C)$ converges weakly, as $n \to \infty$, to a centered Gaussian process $\CC  $ defined at every $u, v \in [0,1]$ by
 $$
\CC  (u,v) = \C  (u,v) - {\dot C}_1 (u,v) \, \C  (u,1) - {\dot C}_2  (u,v) \, \C  (1,v).
 $$
Moreover, if the above assumption on the existence and continuity of the partial derivatives does not hold, then according to Theorem~4 in \cite{Fermanian-Radulovic-Wegkamp:2004}, the empirical copula process does not converge at all.

In the copula modeling literature, the difference between $\C$ and $\CC$ is often interpreted as ``the price to pay for the fact that the margins are unknown.'' This suggests that if $F$ and $G$ were known, it would be preferable to base the inference on $C_n$ rather than on $\hat{C}_n$. It is shown here, perhaps surprisingly, that the opposite is true under weak positive dependence conditions on $C$. When this happens, procedures based on $\CC$ are thus more efficient than the analogous procedures based on $\C$.

The key result is stated and illustrated in Section~\ref{section:2}, along with a partial extension to the case of negative dependence. In Section~\ref{section:3}, circumstances are delineated under which a dependence parameter, say $\theta = \T(C)$, can be estimated more efficiently by a rank-based estimate ${\hat \theta}_n = \T({\hat C}_n)$ than by the analogous estimate $\theta_n = \T(C_n)$ which exploits the knowledge of the margins. Concluding remarks are given in Section~\ref{section:4}, along with a partial multivariate extension of the main result. All technical arguments are collected in the Appendix.

 \section{Main result}
 \label{section:2}

Following \cite{Esary-Proschan:1972}, suppose that the two continuous random variables $X$ and $Y$ are such that the mappings $t \mapsto \Pr (X \le x \mid Y \le t)$ and $t \mapsto \Pr (Y \le y \mid X \le t)$ are both decreasing in $t$ whatever $x, y \in \mathbb{R}$. These tail monotonicity conditions, jointly referred to as left-tail decreasing\-ness (LTD), imply that the pair $(X,Y)$ satisfies the concept of positive quadrant dependence (PQD). From \cite{Lehmann:1966}, this means that for all $x, y \in \mathbb{R}$,
 $$
 \Pr (X \le x, Y \le y) \ge \Pr (X \le x) \Pr (Y \le y).
 $$

Both PQD and LTD can be stated in terms of the underlying copula only. As shown, e.g., in \cite[Chapter 5]{Nelsen:2006}, PQD holds if and only if $C(u,v) \ge uv$ for all $u, v \in [0,1]$, while the LTD property is verified if for almost all $u, v \in (0,1)$,
 \begin{align}
 \label{eq:1}
&{\dot C}_1 (u,v) \le \frac{C(u,v)}{u} \, ,
&{\dot C}_2 (u,v) \le \frac{C(u,v)}{v} \, .
 \end{align}

Many bivariate models with positive dependence meet Conditions $\eqref{eq:1}$, including the bivariate Normal, Beta, Gamma, Student and Fisher distributions. Other examples are provided by the Cook--Johnson bivariate Pareto, Burr and logistic distributions, Gumbel's bivariate exponential and logistic distributions, the Ali--Mikhail--Haq bivariate logistics, the Clayton, Frank, Plackett and Raftery families of copulas.

As it turns out, the LTD property implies a dominance relation between the asymptotic covariance functions of the empirical processes $\C$ and $\CC$. A formal statement of this fact is given below and proved in the Appendix.

 \begin{proposition}
 \label{p:ltd}
Suppose that $C$ is an LTD copula whose partial derivatives ${\dot C}_1$ and ${\dot C}_2$ exist everywhere and are continuous on $(0,1)^2$. Then for all $u, v, s, t \in [0,1]$,
 \begin{equation}
 \label{eq:2}
\cov \{ \CC   (u,v), \CC   (s,t) \} \le \cov \{ \C  (u,v), \C  (s,t) \}.
 \end{equation}
 \end{proposition}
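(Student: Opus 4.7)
The plan is to prove the inequality by direct computation of
$$
\Delta := \cov\{\C(u,v),\C(s,t)\} - \cov\{\CC(u,v),\CC(s,t)\}
$$
and show $\Delta \ge 0$. Substituting the definition of $\CC$, using bilinearity of the covariance, and applying the explicit formula $\cov\{\C(a,b),\C(c,d)\} = C(a\wedge c,b\wedge d) - C(a,b)C(c,d)$ to each of the nine resulting inner covariances, one obtains $\Delta$ as a sum of eight terms, bilinear in the partial derivatives $a_i := \dot C_i(u,v)$ and $b_j := \dot C_j(s,t)$, with each term multiplied by a covariance involving at least one marginal process $\C(\cdot,1)$ or $\C(1,\cdot)$. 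Because the target inequality is symmetric under $(u,v)\leftrightarrow(s,t)$, I will take $u \le s$ and split into the sub-cases $v \le t$ and $v > t$ according to the value of $v\wedge t$.

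In the first sub-case, setting $v\wedge t = v$ allows regrouping the eight terms as
$$
\Delta = a_1 T_1 + a_2 T_2 + b_1(1-s)[C(u,v) - a_1 u] + b_2(1-t)[C(u,v) - a_2 v],
$$
where $T_1 = C(u,t)(1-b_2) - u(C(s,t)-b_2 t)$ and $T_2 = C(s,v)(1-b_1) - v(C(s,t)-b_1 s)$. The two ``$b$-bracket'' summands are non-negative by LTD at $(u,v)$, which via \eqref{eq:1} yields $a_1 u \le C(u,v)$ and $a_2 v \le C(u,v)$. For the $a_1 T_1$ summand, $T_1$ is linear and decreasing in $b_2$ on $[0,C(s,t)/t]$ (its LTD-bounded range), so its minimum is at $b_2 = C(s,t)/t$, where it equals $C(u,t)(t - C(s,t))/t \ge 0$ by the Fr\'echet bound $C(s,t)\le t$; the argument for $T_2$ is symmetric. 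This disposes of the first sub-case.

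The second sub-case $v > t$ is the main obstacle: the direct analog of the regrouping above leaves an unabsorbed negative residual $-a_2 b_1[C(s,v)-sv]$. To circumvent this, I will use that $\Delta$, regarded as a function of $(a_1,a_2,b_1,b_2)$, is multilinear, and that LTD confines this vector to the box $[0,C(u,v)/u]\times[0,C(u,v)/v]\times[0,C(s,t)/s]\times[0,C(s,t)/t]$. A multilinear function on a box attains its minimum at a vertex, so it suffices to verify $\Delta \ge 0$ at each of the sixteen vertices. At every vertex the resulting inequality follows from three universal facts implied by LTD: the monotonicity of $C(x,y)/x$ in $x$ and of $C(x,y)/y$ in $y$ (giving for example $C(u,t)/(ut) \ge C(u,v)/(uv) \vee C(s,t)/(st)$ and $C(s,v)/(sv) \le C(u,v)/(uv) \wedge C(s,t)/(st)$ under $u \le s$ and $t < v$), the Fr\'echet upper bound $C(a,b) \le a\wedge b$, and the PQD lower bound $C(a,b) \ge ab$. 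The vertex check is mechanical but somewhat involved; the absence of a clean monolithic regrouping for this sub-case is what makes it the technical crux of the proof.
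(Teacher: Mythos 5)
Your first sub-case ($u\le s$, $v\le t$) is, after relabelling, exactly the paper's argument. Writing the difference as $\sum_i B_i-\sum_i A_i$ with the paper's eight terms, your regrouping is precisely $a_1T_1=B_1-A_2$, $a_2T_2=B_2-A_3$, and the two ``$b$-brackets'' are $B_3-A_1$ and $B_4-A_4$; your ``linear and decreasing in $b_2$, minimized at the LTD endpoint'' step is the same one-line substitution ${\dot C}_2(s,t)\le C(s,t)/t$ followed by $C(s,t)\le t$ that the paper uses to get $A_2\le B_1$. The divergence is in the sub-case $v>t$, and here you have missed the paper's key (and cheap) device: no new machinery is needed, one simply \emph{re-pairs} the terms as $A_1\le B_1$, $A_2\le B_4$, $A_3\le B_3$, $A_4\le B_2$ (keeping in mind that $B_2$ and $B_4$ change form when $v>t$, becoming ${\dot C}_2(u,v)C(s,t)(1-v)$ and ${\dot C}_2(s,t)\{C(u,t)-tC(u,v)\}$), and each of the four inequalities is again a one-line consequence of \eqref{eq:1}, of $C(u,t)\ge ut$, $C(u,v)\le u$, and of the LTD monotonicity of $x\mapsto C(x,t)/x$. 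Your diagnosis that the \emph{same} pairing leaves an unabsorbed residual $-a_2b_1\{C(s,v)-sv\}$ is correct, but the remedy is a permutation of the matching, not a change of method; this matching trick is also what the authors single out in Section~\ref{section:4} as the obstruction to a $d$-variate extension.

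Your proposed alternative for the second sub-case --- multilinearity of $\Delta$ in $({\dot C}_1(u,v),{\dot C}_2(u,v),{\dot C}_1(s,t),{\dot C}_2(s,t))$ over the LTD box, reducing to sixteen vertex checks --- is a legitimate strategy, and I can confirm it closes: for instance, at the extreme vertex $(C(u,v)/u,\,C(u,v)/v,\,C(s,t)/s,\,C(s,t)/t)$ the required inequality reduces, after dividing each $C$-value by the product of its arguments and using $C(s,v)/(sv)\le C(u,t)/(ut)$, to $x+y\ge 2xy$ with $x=C(u,v)/u\le 1$ and $y=C(s,t)/t\le 1$. However, as written this step is an IOU: the vertex verifications are the entire content of the hard sub-case and none is actually displayed, so the proposal is not yet a proof. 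To complete it you would need to carry out the corner analysis (most efficiently by observing that the vertices with ${\dot C}_1(u,v)={\dot C}_2(u,v)=0$ are trivial and that the remaining cases telescope), or simply adopt the paper's re-pairing, which is shorter and requires only the same three facts you invoke.
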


Inequality $\eqref{eq:2}$ seems to have been intuited in \cite{Charpentier:2007} in the context of copula density estimation; a heuristic explanation was offered, but a formal result was neither stated nor proved. Proposition 4.2 in \cite{Genest-Segers:2009} is also a forerunner of Proposition~$\ref{p:ltd}$ in the case of extreme-value copulas. As shown in \cite{Garralda-Guillem:2000}, the latter are monotone regression dependent in the sense of \cite{Lehmann:1966}; this concept of dependence is stronger than the LTD property.

A simple application of Proposition~$\ref{p:ltd}$ is in the case of independence, where
 $$
\cov \{ \CC   (u,v), \CC   (s,t) \} - \cov \{ \C  (u,v), \C  (s,t) \} = 2uvst  - us(v \wedge t) - vt(u \wedge s)
 $$
is readily seen to be negative for all $u, v, s, t \in [0,1]$. Here is another illustration.

 \begin{example}
 \label{e:fgm}
Consider the Farlie--Gumbel--Morgenstern (FGM) copula with parameter $\theta \in [-1,1]$, which is defined for all $u, v \in [0,1]$ by $C_\theta(u,v) = uv + \theta uv(1-u)(1-v)$. It is easy to check that $C_\theta$ is LTD when $\theta \in [0,1]$; hence Inequality $\eqref{eq:2}$ holds for all $u, v, s, t \in [0,1]$. Analytic expressions for the asymptotic covariances can be derived using \texttt{Maple} but even in this relatively simple case, they are much too long to be displayed here. A graph of $\var \{ \CC (u,v) \} - \var \{ \C (u,v) \}$ is plotted in Figure~1 for the cases $\theta = 1$ (left panel) and $\theta = -1$ (right panel); as the two surfaces look quite similar, their difference ($\text{left}-\text{right}$) is also shown in the middle panel.
 \end{example}

 \begin{figure}
 \begin{center}
 \includegraphics[width=0.30\textwidth]{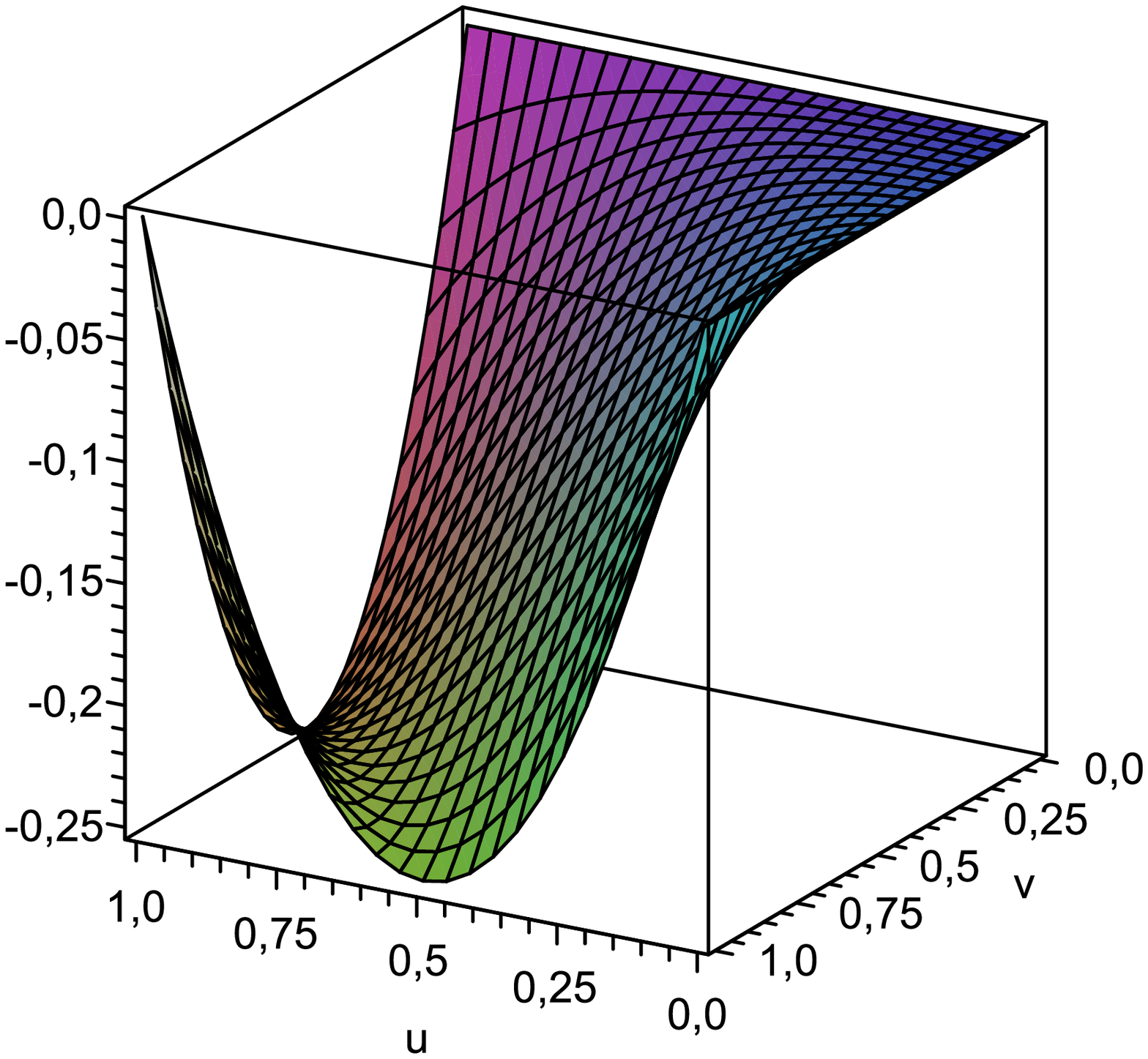}
 \includegraphics[width=0.30\textwidth]{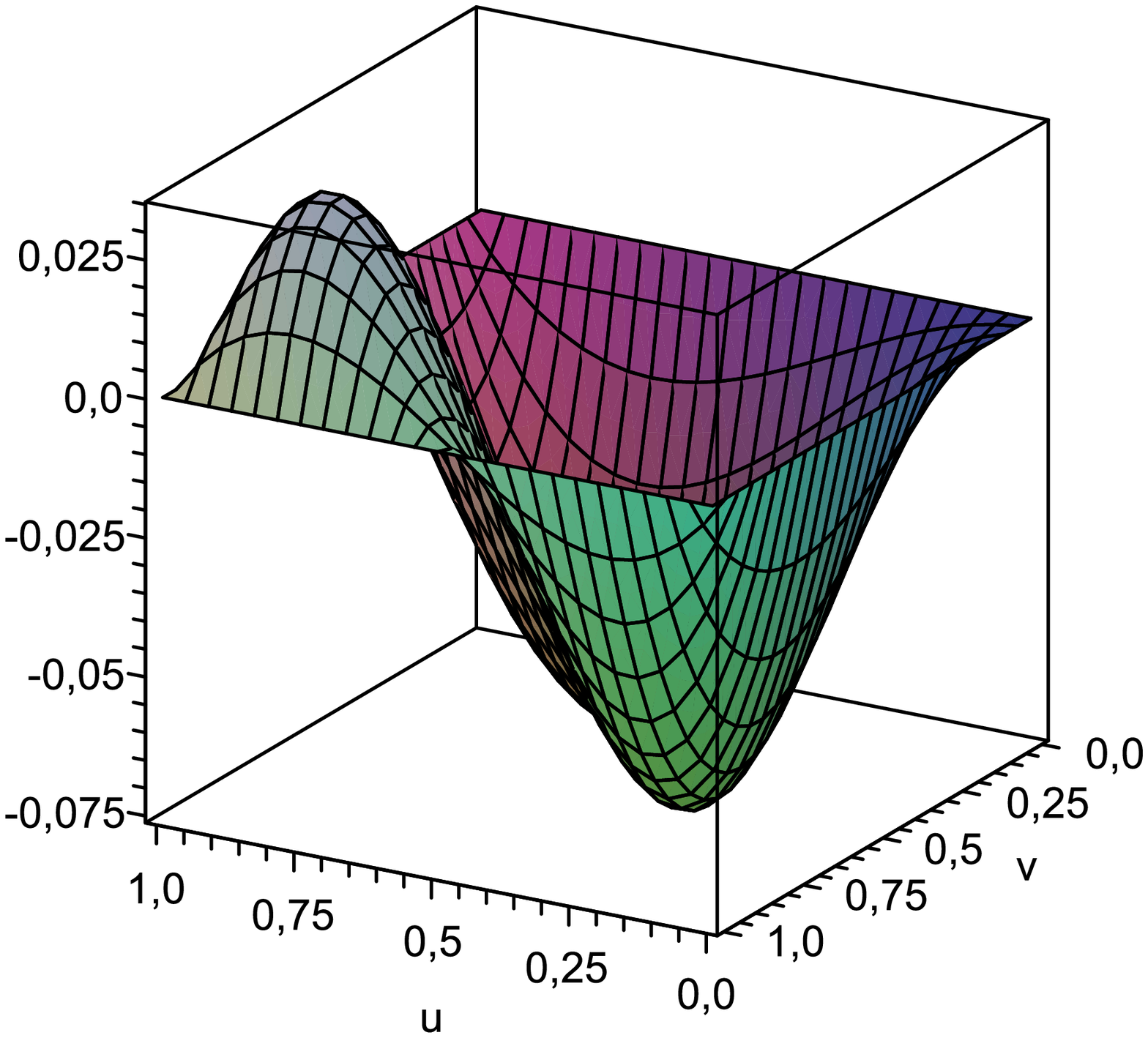}
 \includegraphics[width=0.30\textwidth]{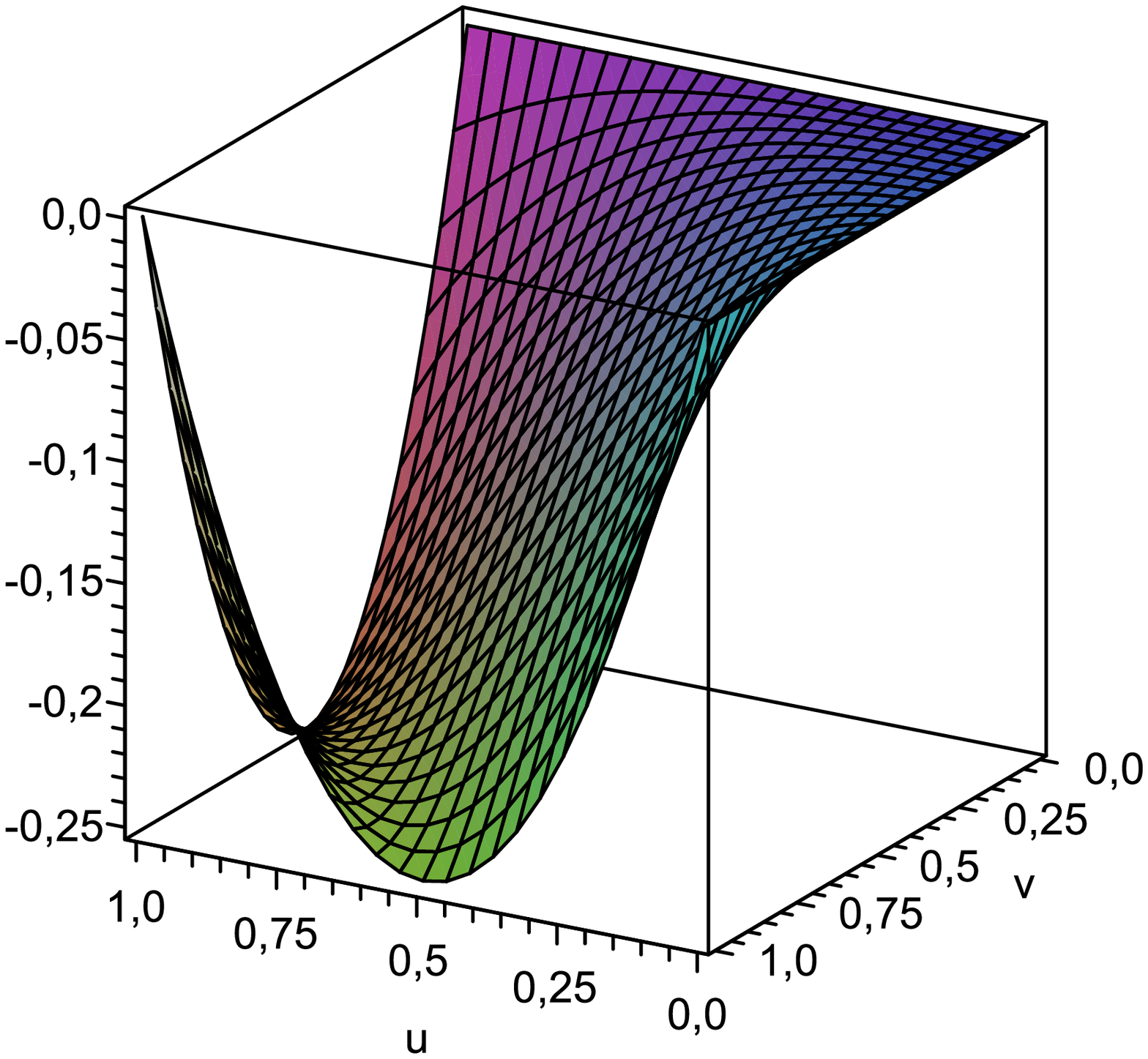}
\caption{\small Graph of $\var \{ \CC (u,v) \} - \var \{ \C (u,v) \}$ for the FGM copula with parameter $\theta = 1$ (left) and $\theta = -1$ (right); the middle graph shows the difference between the two surfaces ($\text{left}-\text{right}$).
 \label{fig:1}}
 \end{center}
 \end{figure}

The right panel of Figure~1 suggests that Proposition~$\ref{p:ltd}$ could possibly be extended to cases where the pair $(X,Y)$ is negative quadrant dependent (NQD), i.e., such that $\Pr (X \le x, Y \le y) \le \Pr (X \le x) \Pr (Y \le y)$ for all $x, y \in \mathbb{R}$. A partial finding along these lines is stated next for copulas that are ``not too negatively dependent,'' in the sense that for all $u, v \in (0,1)$, one has $C(u,v) \le uv$ and
 \begin{align}
 \label{eq:3}
&{\dot C}_1 (u,v) \le 2 \, \frac{C(u,v)}{u} \, ,
&{\dot C}_2 (u,v) \le 2 \, \frac{C(u,v)}{v} \, .
 \end{align}

 \begin{proposition}
 \label{p:nqd}
Suppose that $C$ is an NQD copula whose partial derivatives ${\dot C}_1$ and ${\dot C}_2$ exist everywhere and are continuous on $(0,1)^2$. Further assume that Conditions $\eqref{eq:3}$ hold for all $u, v \in (0,1)$. Then for all $u, v \in [0,1]$,
 \begin{equation}
 \label{eq:4}
\var \{ \CC (u,v) \} \le \var \{ \C (u,v) \}.
 \end{equation}
 \end{proposition}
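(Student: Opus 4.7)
The plan is to reduce Inequality~\eqref{eq:4} to an elementary algebraic inequality. First, I would expand $\var\{{\CC}(u,v)\}$ using the defining representation ${\CC}(u,v) = {\C}(u,v) - {\dot C}_1(u,v)\,{\C}(u,1) - {\dot C}_2(u,v)\,{\C}(1,v)$ together with the covariance formula for the limiting process ${\C}$ recalled in Section~\ref{section:1}. The five covariances that are needed, namely $\var\{\C(u,1)\}$, $\var\{\C(1,v)\}$, $\cov\{\C(u,v),\C(u,1)\}$, $\cov\{\C(u,v),\C(1,v)\}$ and $\cov\{\C(u,1),\C(1,v)\}$, all follow directly from $\cov\{\C(u,v),\C(s,t)\} = C(u \wedge s, v \wedge t) - C(u,v)C(s,t)$ together with $C(u,1)=u$ and $C(1,v)=v$. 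Abbreviating $C = C(u,v)$, $a = {\dot C}_1(u,v)$, $b = {\dot C}_2(u,v)$, this substitution yields
\begin{equation*}
\var\{{\CC}(u,v)\} - \var\{{\C}(u,v)\} = a^2 u(1-u) + b^2 v(1-v) - 2aC(1-u) - 2bC(1-v) + 2ab(C-uv),
\end{equation*}
and the task reduces to showing that the right-hand side is non-positive.

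Next, I would split the right-hand side into three pieces, each manifestly non-positive. Grouping the first and third terms gives $a(1-u)(au - 2C)$, which is non-positive because Condition~\eqref{eq:3} yields $au \le 2C$, while $a \ge 0$ (partial derivatives of a copula are non-negative) and $1-u \ge 0$. Symmetrically, the second and fourth terms combine to $b(1-v)(bv-2C) \le 0$. Finally, the NQD hypothesis forces $C - uv \le 0$, so the cross-product term $2ab(C-uv)$ is also non-positive. Summing the three non-positive pieces delivers Inequality~\eqref{eq:4}.

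I do not foresee a substantive obstacle beyond elementary bookkeeping. The one conceptual point worth emphasizing is that the factor $2$ in Condition~\eqref{eq:3} is calibrated precisely so that each squared term $a^2 u(1-u)$ is absorbed by the matching linear term $2aC(1-u)$; the remaining cross term $2ab(C-uv)$ is then handled separately by NQD. This also clarifies the contrast with Proposition~\ref{p:ltd}: under LTD the cross term has the opposite sign, so the tighter Condition~\eqref{eq:1} is needed in order to absorb it into the squared-plus-linear groupings.
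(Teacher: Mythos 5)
Your proof is correct and follows essentially the same route as the paper: your expansion of $\var\{\CC(u,v)\}-\var\{\C(u,v)\}$ reproduces exactly the paper's $\sum A_i - \sum B_i$ decomposition specialized to $s=u$, $t=v$, and your three groupings correspond to the paper's pairings $A_1 \le B_1+B_3$, $A_4 \le B_2+B_4$, and $A_2=A_3\le 0$ via NQD. Your closing remark about the factor $2$ in Condition~\eqref{eq:3} and the sign flip of the cross term under LTD is an accurate observation, though not part of the paper's argument.
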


This result, which is proved in the Appendix, is considerably weaker than Proposition~$\ref{p:ltd}$ because it only yields Inequality $\eqref{eq:2}$ in the case $u=s$ and $v=t$. Condition $\eqref{eq:3}$ does not correspond to any standard notion of negative dependence and it may be insufficient to establish $\eqref{eq:2}$ in full generality. As the following example shows, however, if Condition $\eqref{eq:3}$ does not hold, then the variance inequality $\eqref{eq:4}$ may be violated.

 \begin{example}
 \label{ex:gb}
Consider the Gumbel--Barnett copula with parameter $\theta \in (0,1]$, which is defined for all $u, v \in (0,1)$ by $C_\theta(u,v) = uv \exp(- \theta \log u \log v) \le uv $. Then $C_\theta$ is NQD and for all $u, v \in (0,1)$, one has both ${u{\dot C}_1 (u,v)}/{C(u,v)} = 1 - \theta \log v$ and ${v{\dot C}_2 (u,v)}/{C(u,v)} = 1 - \theta \log u$. It may easily be checked numerically that Condition $\eqref{eq:3}$ is not verified and that Inequality $\eqref{eq:4}$ fails for $u = v$ close to $0$.
 \end{example}

 \section{Consequences for inference}
 \label{section:3}

Proposition~$\ref{p:ltd}$ has intriguing implications for inference about copula-based dependence parameters. To see why, consider the estimation of Blomqvist's medial correlation coefficient using a random sample $(X_1, Y_1), \ldots $, $(X_n, Y_n)$ from a continuous distribution $H$. A multivariate version of this problem was recently studied in \cite{Schmid-Schmidt:2007}.

When $H$ is bivariate and has underlying copula $C$, Blomqvist's beta is given by
 $$
\T_1 (C) = -1 + 4C \left( \frac{1}{2} \, , \frac{1}{2} \right).
 $$

If the margins $F$ and $G$ of $H$ are known, one can then compute $U_i = F(X_i)$ and $V_i = G(Y_i)$ for each $i \in \{ 1, \ldots , n \}$ and the pairs $(U_1, V_1), \ldots , (U_n, V_n)$ form a random sample from $C$. A natural estimator of $\theta = \T_1 (C)$ is then given by
 $$
\theta_n = \T_1 (C_n) = -1 + 4C_n \left( \frac{1}{2} \, , \frac{1}{2} \right) =
-1 + \frac{4}{n} \, \sum_{i=1}^n \1 \left( U_i \le \frac{1}{2} \, , V_i \le \frac{1}{2} \right).
 $$
When the margins are unknown, however, it is still possible to estimate $\theta$ using
 $$
{\hat \theta}_n = \T_1 ({\hat C}_n) = -1 + 4{\hat C}_n \left( \frac{1}{2} \, , \frac{1}{2} \right) =
-1 + \frac{4}{n} \, \sum_{i=1}^n \1 \left( \frac{R_i}{n} \le \frac{1}{2} \, , \frac{S_i}{n} \le \frac{1}{2} \right),
 $$
where for fixed $i \in \{ 1, \ldots , n \}$, $R_i = n {\hat U}_i$ denotes the rank of $X_i$ among $X_1, \ldots , X_n$ and $S_i = n{\hat V}_i$ denotes the rank of $Y_i$ among $Y_1, \ldots , Y_n$.

It follows from the asymptotic behavior of the processes $\C_n$ and $\CC_n$ that the estimators $\theta_n$ and $\hat \theta_n$ are asymptotically unbiased and Gaussian. In other words, there exist centered Normal random variables $\Theta_1$ and $\hat \Theta_1$ such that, as $n \to \infty$, $n^{1/2} (\theta_n - \theta) \rightsquigarrow \Theta_1$ and $n^{1/2} (\hat \theta_n - \theta) \rightsquigarrow \hat \Theta_1$, where $\rightsquigarrow$ denotes weak convergence.

Clearly, $\theta_n$ cannot be used if $F$ and $G$ are unknown. But if they are known, should $\theta_n$ be preferred to ${\hat \theta}_n$? Surprisingly perhaps, Proposition~$\ref{p:ltd}$ implies that when $C$ satisfies Conditions $\eqref{eq:1}$, the rank-based estimator is asymptotically more efficient than its competitor. In other words,
 $$
\var ({\hat \Theta}_1) = 16 \var \left\{ \CC \left( \frac{1}{2} \, , \frac{1}{2} \right) \right\} \; \le \; 16 \var \left\{ \C \left( \frac{1}{2} \, , \frac{1}{2} \right) \right\} = \var (\Theta_1).
 $$

For example if $C$ is the FGM copula with parameter $\theta \ge 0$, one gets
 $$
\var ({\hat \Theta}_1) = \left(1 + \frac{\theta}{4} \right) \left(1 - \frac{\theta}{4} \right) \; \le
\; \left( 1 + \frac{\theta}{4} \right) \left( 3 - \frac{\theta}{4} \right) = \var (\Theta_1).
 $$
The inequality remains valid for $\theta < 0$, as per Proposition~2. In particular, $\var ({\hat \Theta}) = 1$ and $\var (\Theta) = 3$ at independence. The difference is substantial!

Similar conclusions can be drawn for other copula functionals, such as Spearman's foot\-rule, Spearman's rho and Gini's gamma, respectively defined by
 \begin{align*}
\T_2 (C) &= -2 + 6\int_0^1 C(t,t) \, \dd t, \\
\T_3 (C) &= -3 + 12 \int_0^1 \int_0^1 C(u,v) \, \dd u \, \dd v, \\
\T_4 (C) &= -2 + 4 \int_0^1 \{ C(t,t) + C(t,1-t) \} \, \dd t.
 \end{align*}
In each case, the rank-based estimator ${\hat \theta} = \T ({\hat C}_n)$ is more efficient asymptotically than the plug-in estimator $\theta_n = \T (C_n)$, so long as $C$ satisfies Conditions $\eqref{eq:1}$. The efficiency ratio at independence is 5 for $\T_2$ and $\T_4$, and 7 for $\T_3$.

To illustrate the extent of the improvement in cases of dependence, estimators $\T_i (C_n)$ and $\T_i ({\hat C}_n)$ of parameters $\T_i (C)$ were computed for each $i \in \{ 1, \ldots , 4\}$ from 1000 random samples of size $500$ from the bivariate Normal distribution with correlation $\rho = \pm \, 0.5$. Boxplots showing the variation in the estimates are shown in Figure 2. As one can see, the rank-based estimators are preferable in all cases.

 \begin{figure}[h]
 \begin{center}
 \includegraphics[width=0.45\textwidth]{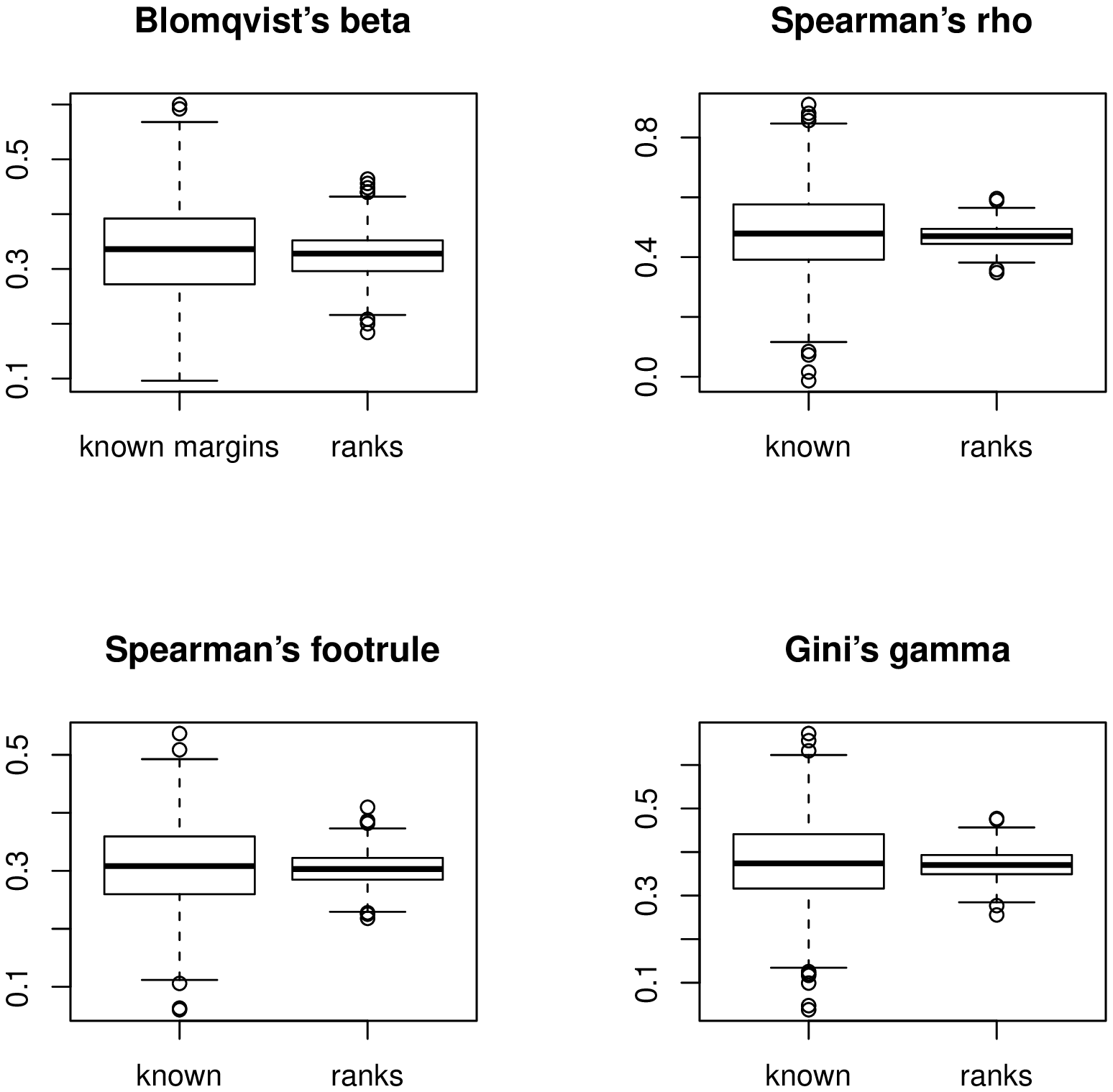}
 \includegraphics[width=0.45\textwidth]{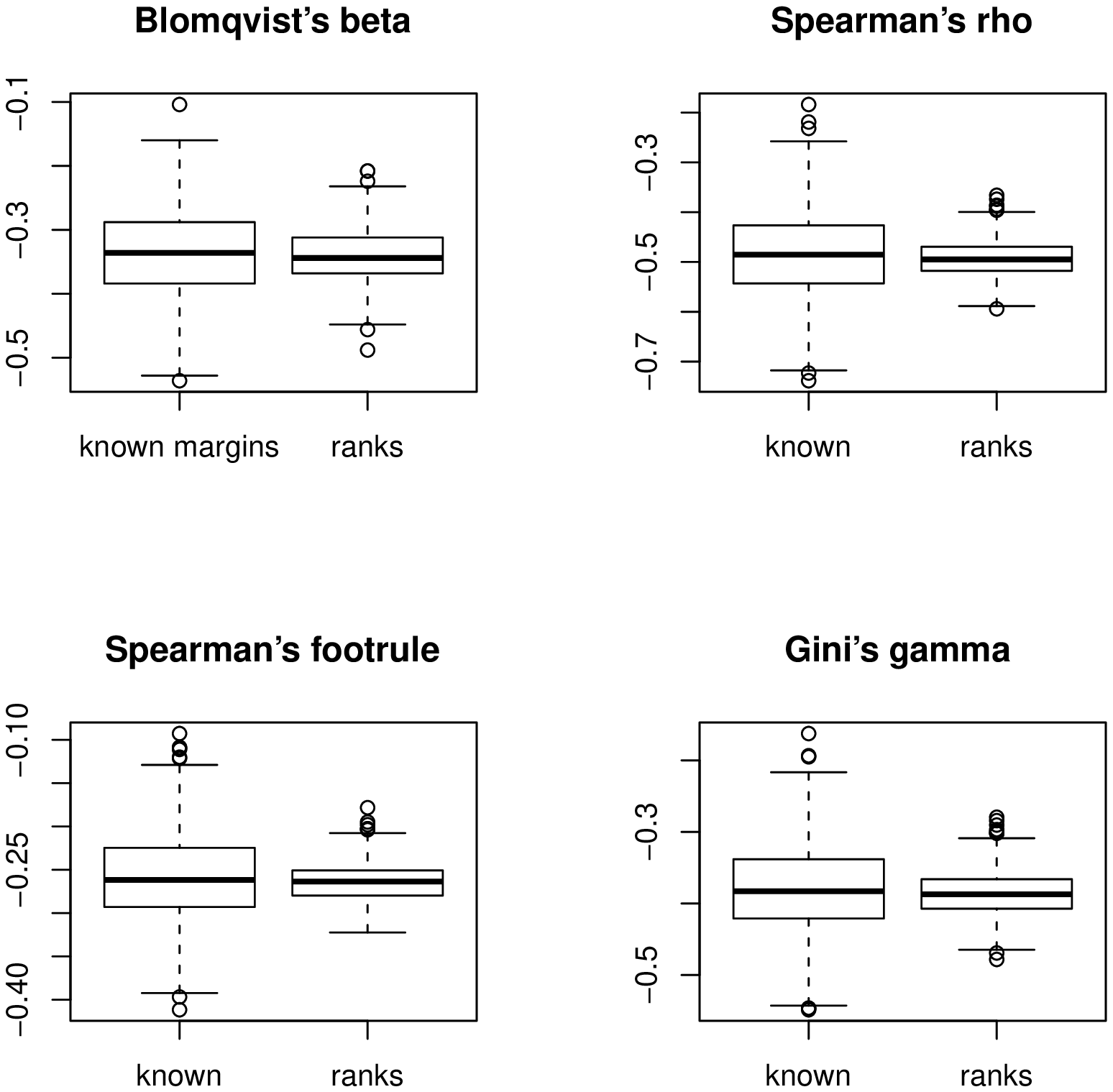}
 \end{center}
\caption{Boxplots for estimates of Blomqvist's beta, Spearman's rho, Spearman's footrule, and Gini's gamma computed from $C_n$ and $\hat{C}_n$, based on 1000 samples of size 500 of the bivariate normal distribution with correlation $\rho = -0.5$ (left) and $\rho = 0.5$ (right).}
 \end{figure}

This observation can be extended as follows by treating $\T$ as a functional on the space $\mathcal{D}$ of c\`adl\`ag functions $\xi: [0,1]^2 \to \mathbb{R}$, equipped with the sup norm.

 \begin{proposition}
 \label{p:deppar}
Suppose that $\T:\mathcal{D} \to \mathbb{R}$ is non-decreasing and Ha\-da\-mard differentiable at any copula $C$, tangentially to the subspace $\TT \subset \D$ of continuous maps. Further assume that as $n \to \infty$, $n^{1/2} \{\T (C_n) - \T(C) \} \rightsquigarrow \Theta$ and $n^{1/2} \{\T ({\hat C}_n) - \T(C) \} \rightsquigarrow {\hat \Theta}$. If $C$ satisfies the conditions in $\eqref{eq:1}$, then $\var ({\hat \Theta}) \le \var (\Theta)$.
 \end{proposition}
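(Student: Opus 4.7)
\bigskip
\noindent\textbf{Proof proposal.}
The plan is to combine the functional delta method with a Riesz representation, so that monotonicity of $\T$ converts the pointwise covariance inequality of Proposition~\ref{p:ltd} into the desired variance inequality. Because $\T$ is Hadamard differentiable at $C$ tangentially to $\TT$ and because the weak limits $\C$ and $\CC$ have continuous trajectories (hence lie in $\TT$), the functional delta method applied to $\C_n \rightsquigarrow \C$ and $\CC_n \rightsquigarrow \CC$ yields $\Theta = \T'_C(\C)$ and $\hat\Theta = \T'_C(\CC)$, where $\T'_C:\TT\to\mathbb{R}$ is the continuous linear Hadamard derivative of $\T$ at $C$.

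Next I would show that this derivative is a positive linear functional. For any $h \in \TT$ with $h \ge 0$ pointwise, the perturbation $C + th$ lies in $\D$ and dominates $C$ for all small $t > 0$, so monotonicity of $\T$ gives $\T(C + th) - \T(C) \ge 0$; dividing by $t$ and letting $t \downarrow 0$ shows $\T'_C(h) \ge 0$. Viewing $\T'_C$ as a positive continuous linear functional on $C([0,1]^2)$ equipped with the supremum norm, the Riesz--Markov--Kakutani representation theorem yields a finite non-negative Borel measure $\mu$ on $[0,1]^2$ such that $\T'_C(h) = \int_{[0,1]^2} h \, \dd\mu$ for every $h \in \TT$.

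Finally, since $\C$ and $\CC$ are centered and their covariance kernels are bounded and continuous on $[0,1]^4$, Fubini gives
\begin{equation*}
\var(\Theta) - \var(\hat\Theta) = \int\!\!\int \bigl[\cov\{\C(u,v),\C(s,t)\} - \cov\{\CC(u,v),\CC(s,t)\}\bigr]\, \dd\mu(u,v)\,\dd\mu(s,t).
\end{equation*}
The integrand is non-negative by Proposition~\ref{p:ltd} and the product measure $\mu \otimes \mu$ is non-negative, so the difference is non-negative, which is precisely the claim.

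The step I expect to require the most care is the representation of $\T'_C$ by a measure: one has to confirm that $\TT$ is rich enough (ideally equal to $C([0,1]^2)$ under the sup norm) for Riesz--Markov--Kakutani to produce a bona fide non-negative measure on $[0,1]^2$, and that the monotonicity of $\T$ on $\D$ transfers cleanly to positivity of $\T'_C$ via the straight-line perturbations $t \mapsto C + th$. The remaining ingredients (the delta method and the interchange of expectation and integration) are routine.
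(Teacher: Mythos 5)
Your proposal is correct and follows essentially the same route as the paper: the functional delta method identifies $\Theta$ and $\hat\Theta$ with $\dot\T_C(\C)$ and $\dot\T_C(\CC)$, monotonicity of $\T$ makes the Hadamard derivative a positive linear functional, the Riesz representation supplies a positive measure $\mu_C$, and integrating the covariance inequality of Proposition~\ref{p:ltd} against $\mu_C \otimes \mu_C$ gives $\var(\hat\Theta) \le \var(\Theta)$. The point you flag as delicate (positivity of $\dot\T_C$ via the perturbations $C + th$) is exactly the step the paper relies on as well, and your justification of it is sound.
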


The conditions of Proposition~3 are very general and easily verified for many concordance measures \cite{Scarsini:1984}, including functionals $\T_1$ to $\T_4$. However, they do not extend beyond the comparison of plug-in estimators based on non-decreasing functionals $\T$.

To illustrate this point, consider the functional
 $$
\T_{5} (\xi) = 1 + 3 \int_0^1 \{ 2\xi(t,t) - \xi(t,1) - \xi(1,t)\} \, \dd t.
 $$
As $\T_5$ fails to be non-decreasing, one cannot conclude that ${\hat \theta}_n = \T_5 ({\hat C}_n)$ has greater asymptotic efficiency than $\theta_n = \T_5 (C_n)$ as an estimator of $\theta = \T_5(C)$. In other words, if $\Theta_5$ and ${\hat \Theta}_5$ are the weak limits of $n^{1/2} (\theta_n - \theta)$ and $n^{1/2}(\hat \theta_n - \theta)$, respectively, one can then have either $\var ({\hat \Theta}_5) \le \var (\Theta_5)$ or $\var ({\hat \Theta}_5) \ge \var (\Theta_5)$. For instance, if $C$ is the FGM copula, then both inequalities occur for different choices of the parameter $\theta \ge 0$, as can be checked readily using the formulas
 \begin{align*}
& \var ({\hat \Theta}_5) = \frac{2}{5} + \frac{3}{70} \; \theta - \frac{11}{150} \; \theta^2 , && \var (\Theta_5) = \frac{1}{2} - \frac{1}{10} \; \theta - \frac{1}{25} \; \theta^2.
 \end{align*}

A subtlety arises in that although $\T_5$ is not monotone, its restriction to the class of copulas coincides with Spearman's footrule. This is because if $C$ is a copula, $C(t,1) = C(1,t) = t$ for all $t \in [0,1]$. Accordingly, $\T_5 (C_n)$ and $\T_5 ({\hat C}_n)$ are estimators of $\theta = \T_2(C) = \T_5(C)$ which differ from $\T_2 (C_n)$ and $\T_2 ({\hat C}_n)$, respectively. Thus if $\Theta_2$ and ${\hat \Theta}_2$ denote the weak limits of $n^{1/2}\{\T_2 (C_n)-\theta\}$ and $n^{1/2}\{\T_2 ({\hat C}_n)-\theta\}$, respectively, then $\var ({\hat \Theta}_2) \le \var (\Theta_2)$ whereas the same inequality may not hold for $\T_5$. For instance if $C$ is the FGM copula with parameter $\theta \ge 0$, one gets
 $$
\var ({\hat \Theta}_2) = \frac{2}{5} + \frac{3}{70} \; \theta - \frac{11}{150} \; \theta^2 \;\le \; 2 + \frac{2}{5} \; \theta - \frac{1}{25} \; \theta^2 = \var (\Theta_2),
 $$
in accordance to Proposition~$\ref{p:ltd}$.

The fact that $\var ({\hat \Theta}_2) = \var ({\hat \Theta}_5)$ is not a coincidence. It occurs because
 $$
\int_0^1 {\hat C}_n (t,1) \, \dd t = \int_0^1 {\hat C}_n (1,t) \, \dd t = \frac{n-1}{2n} \, ,
 $$
so that the rank-based estimators $\T_5 ({\hat C}_n)$ and $\T_2({\hat C}_n)$ are asymptotically equivalent. To see that $\T_5(C_n)$ and $\T_2(C_n)$ are not asymptotically equivalent, note that
 $$
\T_2 (C_n) = 4 - \frac{6}{n} \sum_{i=1}^n \max (U_i, V_i)
 $$
while
 $$
\T_5 (C_n) = 1 - \frac{6}{n} \sum_{i=1}^n \max (U_i, V_i)
+ \frac{3}{n} \sum_{i=1}^n U_i + \frac{3}{n} \sum_{i=1}^n V_i.
 $$
Therefore, $n^{1/2} \{ \T_5 (C_n) - \T_2 (C_n) \}$ converges weakly to a non-degenerate centered Normal random variable as $n \to \infty$.

More generally if $\T$ and $\T^*$ are smooth functionals that coincide on the class of copulas, then the rank-based plug-in estimators $\T ({\hat C}_n)$ and $\T^* ({\hat C}_n)$ are asymptotically equivalent, while $\T (C_n)$ and $\T^* (C_n)$ are not necessarily so. Indeed if ${C}_n^\maltese$ is the checkerboard copula associated to $\hat{C}_n$ (see, e.g., \cite{Genest-Neslehova:2007}), then for all $u,v \in [0,1]$,
 $$
|\hat{C}_n (u,v) - C_n^\maltese (u,v)| \le \frac{1}{n} \, .
 $$
Furthermore, as $C_n^\maltese$ is a \textit{bona fide} copula, $\T(C_n^\maltese) = \T^* (C_n^\maltese)$ for every integer $n \ge 1$. If the mappings $\T$ and $\T^*$ are differentiable in a neighborhood around $C$, one may conclude that the difference between $\T ({\hat C}_n)$ and $\T^*({\hat C}_n)$ is $O_p(1/n)$.

Finally, note that Kendall's tau is an example of a statistic such that $\T (C_n) = \T ({\hat C}_n)$ for all choices of copula $C$. This is because the concordance or discordance status of pairs $(X_i, Y_i)$ and $(X_j, Y_j)$ is the same, whether it is determined from $(U_i, V_i)$ and $(U_j, V_j)$, or from $({\hat U}_i, {\hat V}_i)$ and $({\hat U}_j, {\hat V}_j)$. Accordingly, the use of ranks does not lead to any efficiency gain or loss in estimating Kendall's tau statistic, whatever $C$.

 \section{Discussion}
 \label{section:4}

Proposition~$\ref{p:ltd}$ provides weak and easy-to-check conditions on a copula $C$ which ensure that the asymptotic covariance of the empirical copula process $\CC_n = n^{1/2} ({\hat C}_n - C)$ is uniformly smaller than the asymptotic covariance of the empirical process $\C_n  = n^{1/2} (C_n - C)$ based on a random sample from $C$. As a consequence, it is shown in Proposition~3 that if a copula-based dependence parameter $\T (C)$ is expressed in terms of a non-decreasing functional of $C$, the plug-in rank estimator $\T ({\hat C}_n)$ has a smaller asymptotic variance than the corresponding estimator $\T (C_n)$ which assumes knowledge of the marginal distributions. A numerical illustration further suggests that the gain in efficiency can be substantial, even in finite samples.

These findings are interesting from a theoretical perspective. They provide broad conditions under which inference on copula-based parameters can be improved when raw observations are replaced by ranks. The gains in efficiency come from the estimation of the marginal distributions, which are nuisance parameters in this context. The present results thus provide a new illustration of the paradoxical effect that nuisance parameters sometimes have on the efficiency of estimators; for additional discussion and examples arising in regression, see \cite{Henmi:2005} and references therein.

At the moment, however, the practical implications of the results remain unclear. As mentioned by a referee, no procedures are currently available for testing that data arise from an LTD copula. The only contribution along these lines seems to be \cite{Denuit-Scaillet:2004}, where a test of the weaker condition PQD is considered. While the development of an LTD test would clearly be of interest, it is beyond the scope of the present work. But it may be worth pointing out that whatever the underlying copula, Inequality $\eqref{eq:2}$ cannot be reversed. For, the limit $\CC$ of the empirical copula process is a ``tucked Brownian sheet'' that vanishes everywhere on the border of $[0,1]^2$, whereas the limit $\C$ of the empirical process is identically zero only on the set $\{ (u,v): u=0$ or $v=0$ or $u = v = 1 \}$. Thus for an arbitrary copula $C$ and for all $u, s \in (0,1)$, one has
 $$
0 = \cov \{ \CC (u,1), \C (s,1)\} < \cov \{ \C (u,1), \C(s,1) \} = u \wedge s - us.
 $$
By continuity, one must also have $\cov \{ \CC (u,v), \C (s,t)\} < \cov \{ \C (u,v), \C(s,t) \}$ when $v$ and $t$ are sufficiently close to $1$.

In future work, it would be of interest to find appropriate conditions under which Inequality $\eqref{eq:2}$ holds for negatively dependent dependence structures. The conclusion from Proposition~2 is considerably weaker. An extension of Proposition~$\ref{p:ltd}$ to arbitrary dimension $d \ge 2$ would also be valuable but seems difficult. For, the proof detailed in the Appendix uses the fact that for all $u, v, s, t \in [0,1]$,
 \begin{equation}
 \label{eq:5}
\cov \{ \CC (u,v), \CC (s,t) \} - \cov \{ \C (u,v), \C (s,t) \} = \sum_{i=1}^4 A_i (u,v,s,t) - \sum_{i=1}^4 B_i (u,v,s,t)
 \end{equation}
for specific choices of functions $A_i$, $B_i$, $i \in \{ 1, \ldots , 4 \}$. Under the conditions of Proposition~$\ref{p:ltd}$, one can show that the right-hand side of $\eqref{eq:5}$ is non-negative by matching each $A_i$ with a specific $B_i$, depending on the relative position of $u, v, s, t \in [0,1]$. In the $d$-variate case, however, there are $d^2$ terms of type $A$ but only $2d$ terms of type $B$, making it impossible to extend the technique used in the proof.

Nevertheless, it is shown in the Appendix that Inequality $\eqref{eq:2}$ continues to hold at independence in higher dimensions. This final result is formally stated below. By a continuity argument, one can thus expect that Proposition~$\ref{p:ltd}$ can be extended to multivariate copulas in an appropriate neighborhood of independence.

 \begin{proposition}
 \label{p:mv}
Let $\C_d$ be the $d$-variate pinned $\pi$-Brownian sheet whose covariance function is given for all $\bu, \bv \in [0,1]^d$ by $\cov \{ \C_d (\bu), \C_d (\bv) \} = \pi (\bu \wedge \bv) - \pi (\bu) \pi (\bv)$,
where for $\bu = (u_1, \ldots , u_d)$, $\bv = (v_1, \ldots , v_d) \in [0, 1]^d$, $\bu \wedge \bv = (u_1 \wedge v_1, \ldots, u_d \wedge v_d)$ and $\pi(\bu) = u_1 \times \cdots \times u_d$. For arbitrary $\bu \in [0,1]^d$, let also
 $$
\CC_d (\bu) = \C_d (\bu) - \sum_{k=1}^d \dot{\pi}_k(\bu) \C_d (\bu_k),
 $$
where $\dot{\pi}_k (\bu) = \partial \pi(\bu)/\partial u_k$ and where $\bu_k$ denotes a vector whose $j$th coordinate is $u_k$ if $j=k$ and $1$ otherwise. Then for all $\bu, \bv \in [0,1]^d$,
 $$
\cov \{ \CC_d (\bu), \CC_d (\bv) \} \le \cov \{ \C_d (\bu), \C_d (\bv) \}.
 $$
 \end{proposition}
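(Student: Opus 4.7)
The plan is a direct covariance computation, exploiting the fact that at independence the covariance factors multiplicatively. Write $a_k = \dot\pi_k(\bu) = \prod_{j\neq k} u_j$ and $b_k = \dot\pi_k(\bv) = \prod_{j\neq k} v_j$, both of which lie in $[0,1]$. First I would expand bilinearly:
\begin{align*}
\cov\{\CC_d(\bu),\CC_d(\bv)\}
&= \cov\{\C_d(\bu),\C_d(\bv)\}
 - \sum_{k=1}^d b_k \cov\{\C_d(\bu),\C_d(\bv_k)\} \\
&\quad - \sum_{k=1}^d a_k \cov\{\C_d(\bu_k),\C_d(\bv)\}
 + \sum_{k,l=1}^d a_k b_l \cov\{\C_d(\bu_k),\C_d(\bv_l)\},
\end{align*}
so the task reduces to evaluating the three elementary pieces using the formula $\cov\{\C_d(\bx),\C_d(\by)\} = \pi(\bx\wedge\by) - \pi(\bx)\pi(\by)$.

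The key simplification is that $\bu_k\wedge\bv_l$ has only the $k$th and $l$th coordinates potentially different from $1$. A quick case analysis gives
\[
\cov\{\C_d(\bu),\C_d(\bv_k)\} = a_k\bigl\{(u_k\wedge v_k) - u_k v_k\bigr\}, \qquad
\cov\{\C_d(\bu_k),\C_d(\bv)\} = b_k\bigl\{(u_k\wedge v_k) - u_k v_k\bigr\},
\]
and, most importantly,
\[
\cov\{\C_d(\bu_k),\C_d(\bv_l)\} =
\begin{cases}
(u_k\wedge v_k)- u_k v_k & \text{if } k=l,\\
0 & \text{if } k\neq l,
\end{cases}
\]
the off-diagonal vanishing being the crucial simplification: for $k\neq l$, both $\pi(\bu_k\wedge\bv_l)$ and $\pi(\bu_k)\pi(\bv_l)$ collapse to $u_k v_l$. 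Plugging these expressions back in, the double sum over $k,l$ reduces to a single sum, and after collecting terms one obtains
\[
\cov\{\CC_d(\bu),\CC_d(\bv)\} - \cov\{\C_d(\bu),\C_d(\bv)\}
= -\sum_{k=1}^d a_k b_k \bigl\{(u_k\wedge v_k) - u_k v_k\bigr\}.
\]

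To conclude, observe that for any $u_k,v_k\in[0,1]$ one has $(u_k\wedge v_k)-u_k v_k = (u_k\wedge v_k)\{1-(u_k\vee v_k)\}\ge 0$, while $a_k,b_k\ge 0$; therefore each summand on the right-hand side is non-negative and the difference is $\le 0$, as claimed. There is no genuine obstacle here beyond careful bookkeeping; the only step that requires a moment's thought is verifying that the off-diagonal covariances vanish, since this is precisely what prevents the $d^2$ quadratic terms from swamping the $2d$ linear ones — a phenomenon that, as the authors note in the discussion, has no analogue away from the independence copula and is the reason the argument does not extend beyond Proposition~\ref{p:mv}.
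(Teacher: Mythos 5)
Your proof is correct and follows essentially the same route as the paper's: both expand the covariance bilinearly, observe that $\cov\{\C_d(\bu_k),\C_d(\bv_\ell)\}=0$ for $k\neq\ell$, and reduce the difference to $-\sum_{k=1}^d \dot\pi_k(\bu)\dot\pi_k(\bv)\{(u_k\wedge v_k)-u_kv_k\}\le 0$, which matches the paper's expression $-\pi(\bu)\pi(\bv)\sum_k (u_k\wedge v_k-u_kv_k)/(u_kv_k)$ term by term. Nothing is missing.
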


 \section*{Appendix: Proofs}

 \begin{proof}[Proof of Proposition~$\ref{p:ltd}$.]
It is obvious from the definition of the limiting process $\CC  $ that Equation $\eqref{eq:5}$ holds with
 \begin{align*}
A_1 (u,v,s,t) &= {\dot C}_1 (u,v) \, {\dot C}_1 (s,t) \cov \{ \C  (u,1), \C  (s,1) \}, \\
A_2 (u,v,s,t) &= {\dot C}_1 (u,v) \, {\dot C}_2 (s,t) \cov \{ \C  (u,1), \C  (1,t) \}, \\
A_3 (u,v,s,t) &= {\dot C}_2 (u,v) \, {\dot C}_1( s,t) \cov \{ \C  (1,v), \C  (s,1) \}, \\
A_4 (u,v,s,t) &= {\dot C}_2 (u,v) \, {\dot C}_2 (s,t) \cov \{ \C  (1,v), \C  (1,t) \}
 \end{align*}
 and
  \begin{align*}
B_1 (u,v,s,t) &= {\dot C}_1 (u,v) \cov \{ \C (u,1), \C (s,t) \}, \\
B_2 (u,v,s,t) &= {\dot C}_2 (u,v) \cov \{ \C (1,v), \C (s,t) \}, \\
B_3 (u,v,s,t) &= {\dot C}_1 (s,t) \cov \{ \C (u,v), \C (s,1) \}, \\
B_4 (u,v,s,t) &= {\dot C}_2 (s,t) \cov \{ \C (u,v), \C (1,t) \}.
 \end{align*}

By symmetry, it can be assumed without loss of generality that $u \le s$. Two cases must be distinguished, according to whether $v \le t$ or $v > t$. If $v \le t$, then
 \begin{align*}
A_1 (u,v,s,t) &= {\dot C}_1 (u,v) \, {\dot C}_1 (s,t) \, u(1-s), \\
A_2 (u,v,s,t) &= {\dot C}_1 (u,v) \, {\dot C}_2 (s,t) \{ C(u,t) - ut \}, \\
A_3 (u,v,s,t) &= {\dot C}_2 (u,v) \, {\dot C}_1 (s,t) \{ C(s,v) - sv \}, \\
A_4 (u,v,s,t) &= {\dot C}_2 (u,v) \, {\dot C}_2 (s,t) \, v (1-t),
 \end{align*}
 and
  \begin{align*}
B_1 (u,v,s,t) &= {\dot C}_1 (u,v) \{ C(u,t) - u \, C(s,t) \}, \\
B_2 (u,v,s,t) &= {\dot C}_2 (u,v) \{ C(s,v) - v \, C(s,t) \}, \\
B_3 (u,v,s,t) &= {\dot C}_1 (s,t) C(u,v)(1-s), \\
B_4 (u,v,s,t) &= {\dot C}_2 (s,t) C(u,v)(1-t).
 \end{align*}

As it happens,
 \begin{align}
 \label{eq:A1}
 \tag{A1}
& A_1 \le B_3, & A_2 \le B_1, && A_3 \le B_2, && A_4 \le B_4,
 \end{align}
where the dependence on $u,v,s,t$ has been suppressed for clarity. Indeed, $A_1 \le B_3$ occurs if and only if ${\dot C}_1  (u,v) {\dot C}_1 (s,t) \, u(1-s) \le {\dot C}_1 (s,t) C(u,v) (1-s)$, which is equivalent to $\eqref{eq:1}$. Similarly, $A_4 \le B_4$.

To get $A_2 \le B_1$, one must check that ${\dot C}_2   (s,t) \{ C(u,t) - ut \} \le C(u,t) -uC(s,t)$.
Using $\eqref{eq:1}$ and the fact that $C(u,t) \ge ut$, one finds
 \begin{align*}
{\dot C}_2   (s,t) \{ C(u,t) - ut \} &\le \frac{C(s,t)}{t} \, \{ C(u,t) - ut \} \\
&= \frac{C(s,t)}{t} \, C(u,t) - u C(s,t) \le  C(u,t) - u C(s,t),
 \end{align*}
where the last inequality is justified because $C(s,t) \le t$. A similar argument yields $A_3 \le B_2$, and hence $\eqref{eq:A1}$ is established.

Now assume that $v > t$. Then
 \begin{align*}
A_1 (u,v,s,t) &= {\dot C}_1 (u,v) \, {\dot C}_1 (s,t) \, u(1-s), \\
A_2 (u,v,s,t) &= {\dot C}_1 (u,v) \, {\dot C}_2 (s,t) \{ C(u,t) - ut \}, \\
A_3 (u,v,s,t) &= {\dot C}_2 (u,v) \, {\dot C}_1 (s,t) \{ C(s,v) - sv \}, \\
A_4 (u,v,s,t) &= {\dot C}_2 (u,v) \, {\dot C}_2 (s,t) \, t (1-v),
 \end{align*}
 and
 \begin{align*}
B_1 (u,v,s,t) &= {\dot C}_1 (u,v) \{ C(u,t) - u \, C(s,t) \}, \\
B_2 (u,v,s,t) &= {\dot C}_2 (u,v) C(s,t) (1-v), \\
B_3 (u,v,s,t) &= {\dot C}_1 (s,t) C(u,v)(1-s), \\
B_4 (u,v,s,t) &= {\dot C}_2 (s,t) \{ C(u,t) - t C(u,v) \}.
 \end{align*}

In that case, it turns out that
 \begin{align}
 \label{eq:A2}
 \tag{A2}
&A_1 \le B_1, & A_2 \le B_4, && A_3 \le B_3, && A_4 \le
B_2.
 \end{align}
Indeed, $A_1 \le B_1$ is equivalent to ${\dot C}_1 (s,t) \, u(1-s) \le C(u,t) - uC(s,t)$. But by $\eqref{eq:1}$,
 $$
{\dot C}_1 (s,t) \, u(1-s) \le \frac{C(s,t)}{s} \, u - u  C(s,t) .
 $$
Thus one can see that
 $$
\frac{C(s,t)}{s} \, u - u C(s,t) \le C(u,t) - u C(s,t)
 $$
whenever $C(s,t)/s \le C(u,t)/u$. The latter holds true because the mapping $u \mapsto C(u,t)/u$ is non-increasing and $u < s$ by the LTD property. Similarly, $A_4 \le B_2$.

Finally, $A_2 \le B_4$ is equivalent to ${\dot C}_1  (u,v) \{ C(u,t) -ut \} \le C(u,t) - t C(u,v)$. Given that $C(u,t) \ge ut$, one can invoke $\eqref{eq:1}$ to write
 $$
{\dot C}_1 (u,v) \{ C(u,t) -ut \} \le \frac{C(u,v)}{u} \, C(u,t) - t C(u,v) \le C(u,t) - t C(u,v),
 $$
where the last inequality is valid because $C(u,v) \le u$. The proof that $A_3 \le B_3$ is similar. The conjunction of $\eqref{eq:A1}$ and $\eqref{eq:A2}$ implies the desired conclusion.
 \end{proof}

 \begin{proof}[Proof of Proposition~$\ref{p:nqd}$.]
Upon setting $s=u$ and $t=v$ in the formulas presented above, one finds
 $$
A_2 = A_3 = {\dot C}_1 (u,v) {\dot C}_2  (u,v)\{ C(u,v) - uv \} \le
0
 $$
for all $u, v \in [0,1]$. Thus it suffices to see that $A_1 \le B_1 + B_3$ and $A_4 \le B_2 + B_4$. This is clearly the case, because $A_1 = (1-u)u {\dot C}_1 ^2 (u,v)$ and $B_1 = B_3 = (1-u)C(u,v){\dot C}_1 (u,v)$, while $A_4 = (1-v)v {\dot C}_2  ^2 (u,v)$ and $B_2 = B_4 = (1-v)C(u,v){\dot C}_2 (u,v)$.
 \end{proof}

 \begin{proof}[Proof  of Proposition~$\ref{p:deppar}$.]
As in \cite[Chapter~20]{vandervaart:1998}, Hadamard differentiability is taken to mean that there exists a continuous linear functional ${\dot \T}_C: \TT \to \mathbb{R}$ such that for every $\xi \in \TT$,
  $$
\lim_{n \to \infty} \frac{\T(C + h_n \xi_n) - \T(C)}{h_n} = \dot{\T}_C(\xi)
 $$
whenever $h_n \downarrow 0$ and $\xi_n \to \xi$ as $n \to \infty$ uniformly. An application of the Functional Delta Method thus implies that, as $n \to \infty$,
 $$
n^{1/2} \{ \T(C_n) - \T(C) \} = n^{1/2} \{ \T (C + n^{-1/2} \C _n ) - \T (C) \} \rightsquigarrow \dot{\T}_C(\C ).
 $$
Consequently, $\Theta = \dot{\T}_C(\C)$ in distribution. Similarly, $\hat \Theta = \dot{\T}_C(\CC)$ in distribution.

Now because the functional $\dot{\T}_C$ belongs to the dual of $\TT$, the Riesz Representation Theorem implies the existence of a bounded Borel measure $\mu_C$ on $[0,1]^2$ such that $\dot{\T}_C(\xi) = \int \xi \, \dd \mu_C$
for all $\xi \in \TT$. Accordingly,
 \begin{align*}
\var (\Theta ) &= \int_{[0,1]^2} \int_{[0,1]^2}
	\cov \{ \C (u, v), \C (s, t) \} \, \dd \mu_C(u, v) \, \dd \mu_C(s, t), \\
\var ({\hat \Theta} ) &= \int_{[0,1]^2} \int_{[0,1]^2}
	\cov \{ \CC (u, v), \CC (s, t) \} \, \dd \mu_C(u, v) \, \dd \mu_C(s, t). \\
 \end{align*}

Finally, the assumption that $\T$ is non-decreasing means that $\xi \le \xi^* \Rightarrow \T(\xi) \le \T(\xi^*)$, where the inequality between functions is understood to hold point\-wise. It then follows that ${\dot \T}_C(\xi) \ge 0$ whenever $\xi \ge 0$. Thus the measure $\mu_C$ must be positive and $\var ({\hat \Theta}) \le \var (\Theta)$ holds as soon as Inequality $\eqref{eq:2}$ holds for all $u, v, s, t \in [0,1]$.
 \end{proof}

 \begin{proof}[Proof of Proposition~$\ref{p:mv}$.]
A simple calculation shows that
 $$
	\left.
		\begin{array}{l}
			C_k (\bv) \cov \{ \C_d (\bu), \C_d (\bv_k) \} \\ [1ex]
			C_k (\bu) \cov \{ \C_d (\bu_k), \C_d (\bv) \} \\ [1ex]
			C_k(\bu) C_\ell(\bv) \cov \{ \C_d (\bu_k), \C_d (\bv_k) \}
		\end{array}
	\right\}
	= \pi(\bu)\pi(\bv)\, \frac{u_k \wedge v_k - u_k v_k}{u_kv_k} \, .
 $$
Given that $\cov \{ \C_d (\bu_k), \C_d (\bv_\ell) \} = 0$ whenever $k \neq \ell$, one gets
 $$
\cov \{ \CC_d (\bu), \CC_d (\bv) \} - \cov \{ \C_d (\bu), \C_d (\bv) \} =
- \pi(\bu) \pi(\bv) \sum_{k=1}^d \frac{u_k \wedge v_k - u_k v_k}{u_kv_k} \, ,
 $$
which is clearly negative for all $\bu$ and $\bv \in [0,1]^d$.
 \end{proof}

 \section*{Acknowledgements}

The authors are grateful to Ivan Kojadinovic and Johanna Ne\v{s}lehov\'a for useful discussion. Funding in support of the first author's work was provided by the Natural Sciences and Engineering Research Council of Canada, by the Fonds qu\'e\-b\'e\-cois de la re\-cher\-che sur la na\-tu\-re et les tech\-no\-lo\-gies, and by the Ins\-ti\-tut de fi\-nan\-ce ma\-th\'e\-ma\-ti\-que de Montr\'eal. Funding in support of the second author's work was provided by Research supported by IAP research network grant P6/03 of the Belgian government (Belgian Science Policy) and by ``Pro\-jet d'ac\-tions de re\-cher\-che con\-cer\-t\'ees'' number 07/12/002 of the Com\-mu\-nau\-t\'e fran\-\c{c}ai\-se de Bel\-gi\-que, granted by the Aca\-d\'e\-mie uni\-ver\-si\-taire de Lou\-vain.

{}
 \end{document}